\DeclarePairedDelimiter\floor{\lfloor}{\rfloor}
\newcommand\restr[2]{{
  \left.\kern-\nulldelimiterspace 
  #1 
  \vphantom{\big|} 
  \right|_{#2} 
  }}
\def\BState{\State\hskip-\ALG@thistlm}
\numberwithin{equation}{section}
\newtheorem{theorem}{Theorem}[section]
\newtheorem{lem}[theorem]{Lemma}
\newtheorem{rem}[theorem]{Remark}
\newtheorem{conject}[theorem]{Conjecture}
\newcommand{\ds}{\displaystyle}
\title{Proof of a Conjecture on Wiener Index and Eccentricity of a graph due to edge contraction}
\author{Joyentanuj Das$^*$   \quad and \quad Ritabrata Jana\footnote{School of Mathematics, IISER Thiruvananthapuram, Maruthamala P.O., Vithura, 
Thiruvananthapuram,\newline \indent   Kerala - 695551, India. 
\newline Emails:
joyentanuj16@iisertvm.ac.in,  ritabrata20@iisertvm.ac.in}}
\date{}
\begin{document}

\maketitle

\begin{abstract}
For a connected graph $G$, the Wiener index, denoted by $W(G)$, is the sum of the distance of all pairs of distinct vertices and the eccentricity, denoted by $\varepsilon(G)$, is the sum of the eccentricity of individual vertices. In \cite{Kc}, the authors posed a conjecture which states that given a graph $G$ with at least three vertices, the difference between $W(G)$ and $\varepsilon(G)$ decreases when an edge is contracted and proved that the conjecture is true when $e$ is a bridge. In this manuscript, we confirm that the conjecture is true for any connected graph $G$ with at least three vertices irrespective of the nature of the edge chosen.
\end{abstract}

\noindent {\sc\textsl{Keywords}:} Wiener index, eccentricity, edge contraction.\\

\noindent {\sc\textbf{MSC}:}  05C09, 05C12.

\section{Introduction}
Let $G=(V(G),E(G))$  be a finite, simple, connected graph with $V(G)$ as the set of vertices and $E(G)$ as the set of edges in $G$. We write $u\sim v$ to indicate that the vertices $u$ and $v$ are adjacent in $G$. \textcolor{black}{ We denote the complete graph on $n$ vertices by $K_n$ and the path graph on $n$ vertices by $P_n$. A vertex $u$ is said to be a neighbour of a vertex $v$ if $u \sim v$. The collections of all such neighbours of $v$ in $G$ is denoted by $N_G(v)$.}

For a given edge $e$, we write $G.e$ to denote the graph obtained from $G$ by contracting the edge $e$. More precisely, if $e$ is the edge between two vertices $x$ and $y$ in $G$ then, the vertices $x$ and $y$ are merged contracting the edge $e$ in $G.e$ and we rename the vertex as $\alpha$. Note that, due to this graph transformation we have $N_{G.e}(\alpha) = N_G(x) \cup N_G(y)$. For the vertices $x$ and $y$ we will use $x( \text{or } y)$ to denote inclusive or, \textit{i.e.} $x$ or $y$ or both.

A $uv$-path in $G$ is a path in $G$ whose end vertices are $u$ and $v$. Let $u \in V(G)$ and $P$ be a path in $G$. We say that a path $P$ uses a vertex $u$ if $u \in V(P)$. Similarly, by saying that a path $P$ uses an edge $e$ we mean that $e \in E(P)$.

A connected graph $G$ is a metric space with respect to the metric $d,$ where $d_G(u,v)$ equals the length of a minimal $uv$-path. We set $d_G(u,u)=0$ for every vertex $u$ in $G$. A $uv$-path in $G$ is said to be of minimal length if the length of the path is equal to $d_G(u,v)$. The Wiener index of a graph $G$, denoted by $W(G)$ is defined as $$W(G) = \sum_{\{u,v\} \subset V(G)} d_G(u,v) = \frac{1}{2} \sum_{u \in V(G)} \sum_{v \in V(G)} d_G(u,v).$$ The Wiener index is the oldest topological index studied in mathematical chemistry and is one of the most studied among such indices (for surveys one may refer to \cite{Do1, Kn2}) and is an active area of research (for details see \cite{Ca, Kc, Kc2, Do, Kn1, Pe} and the references there in).

If $u \in V(G)$ then the eccentricity $\varepsilon_G(u)$ is the distance from $u$ to a farthest vertex from $u$. A vertex $v$ is said to be an eccentric vertex of $u$ if $d_G(u,v) = \varepsilon_G(u)$. The eccentricity of a graph $G$ is $$\varepsilon(G) = \sum_{u \in V(G)} \varepsilon_G(u)$$ which is also known as total  eccentricity of a graph. The radius $\textup{rad}(G)$ of $G$ and the diameter $\textup{diam}(G)$ of $G$ are the minimum and maximum eccentricity, respectively.  For studies related to total eccentricity of a graph and average eccentricity of a graph one may refer to \cite{Da}-\cite{Kc1} and \cite{De}. In \cite{Ca}, the Wiener index and the average eccentricity has been studied on strong products of graphs.  In \cite{Kc}, the authors studied the relation between Wiener index and eccentricity for certain classes of graphs and posed the following conjecture:

\begin{conject}\label{Con1}
If $e$ is an edge of a graph $G$ with number of vertices at least $3$, then $$W(G.e) - \varepsilon(G.e) \le W(G) - \varepsilon(G).$$
\end{conject}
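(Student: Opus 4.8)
The goal is equivalent to proving $W(G)-W(G.e)\ge \varepsilon(G)-\varepsilon(G.e)$, and the plan is to compute both sides exactly and then compare them term by term. First I would record two facts about how distances behave under contraction of $e=xy$. Let $\pi$ be the map sending $x,y\mapsto\alpha$ and fixing every other vertex. The first fact is the standard monotonicity $d_{G.e}(\pi(u),\pi(v))\le d_G(u,v)$. The second, which is the technical engine of the argument, is that contracting a single edge shrinks any distance by at most one: $d_{G.e}(\pi(u),\pi(v))\ge d_G(u,v)-1$. I would prove this by lifting a shortest $\pi(u)\pi(v)$-path in $G.e$ back to $G$; being shortest it is simple and so meets $\alpha$ at most once, and the only possible loss is that it enters $\alpha$ along a neighbour of $x$ and leaves along a neighbour of $y$, in which case reinserting the edge $xy$ lengthens the lift by exactly one.

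Next I would split both sides along the same partition of vertex pairs, using $d_{G.e}(\alpha,w)=\min\{d_G(x,w),d_G(y,w)\}$ for $w\neq x,y$. The pair $\{x,y\}$ contributes $d_G(x,y)=1$; each pair merged into $\{\alpha,w\}$ contributes $d_G(x,w)+d_G(y,w)-d_{G.e}(\alpha,w)=\max\{d_G(x,w),d_G(y,w)\}$; and the pairs avoiding $x,y$ contribute a total $R\ge0$ by monotonicity. Thus
\[ W(G)-W(G.e)=1+\sum_{w\neq x,y}\max\{d_G(x,w),d_G(y,w)\}+R. \]
On the eccentricity side the two terms $\varepsilon_G(x),\varepsilon_G(y)$ collapse to the single term $\varepsilon_{G.e}(\alpha)$, giving
\[ \varepsilon(G)-\varepsilon(G.e)=\big(\varepsilon_G(x)+\varepsilon_G(y)-\varepsilon_{G.e}(\alpha)\big)+\sum_{w\neq x,y}\big(\varepsilon_G(w)-\varepsilon_{G.e}(w)\big), \]
where the two distance facts force each drop $\varepsilon_G(w)-\varepsilon_{G.e}(w)\in\{0,1\}$.

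Subtracting and discarding $R\ge0$, the conjecture reduces to
\[ 1+\sum_{w\neq x,y}\Big(\max\{d_G(x,w),d_G(y,w)\}-\big(\varepsilon_G(w)-\varepsilon_{G.e}(w)\big)\Big)\ \ge\ \varepsilon_G(x)+\varepsilon_G(y)-\varepsilon_{G.e}(\alpha). \]
Every summand on the left is nonnegative, since each $\max$ is at least $1$ while each drop is at most $1$, so the task is to produce enough slack. Assuming $\varepsilon_G(x)\le\varepsilon_G(y)$ and letting $p,q$ be eccentric vertices of $x$ and $y$, I would keep only the summands indexed by $p$ and $q$, which are worth at least $\varepsilon_G(x)-1$ and $\varepsilon_G(y)-1$. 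In the generic case $p\neq q$ (which holds automatically once $\varepsilon_G(y)\ge2$ guarantees $q\neq x,y$) the left side is at least $\varepsilon_G(x)+\varepsilon_G(y)-1$ and the inequality collapses to $\varepsilon_{G.e}(\alpha)\ge1$, which holds simply because $G.e$ still has at least two vertices --- this is exactly where the hypothesis $|V(G)|\ge3$ is used.

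The place where I expect the real difficulty is the coincidence case $p=q=:r$, where only one witnessing summand is available, worth $\varepsilon_G(y)-1$, and the reduced inequality sharpens to $\varepsilon_{G.e}(\alpha)\ge\varepsilon_G(x)$, which is strictly stronger than $\varepsilon_{G.e}(\alpha)\ge1$ whenever $\varepsilon_G(x)\ge2$. The resolution is that the common eccentric vertex $r$ itself certifies the stronger bound, since $\varepsilon_{G.e}(\alpha)\ge\min\{d_G(x,r),d_G(y,r)\}=\min\{\varepsilon_G(x),\varepsilon_G(y)\}=\varepsilon_G(x)$. The low-eccentricity situations ($\varepsilon_G(x)=1$, where $p$ might be $y$, and $\varepsilon_G(x)=\varepsilon_G(y)=1$, where both vertices dominate the graph) fall out of the same inequality using only $\varepsilon_{G.e}(\alpha)\ge1$. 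The overarching obstacle is a persistent off-by-one: every crude estimate loses precisely $1$, and the proof only closes because the $+1$ contributed by contracting $xy$ itself, combined with eccentric witnesses chosen to match, leaves a residue of exactly $\varepsilon_{G.e}(\alpha)-1\ge0$ --- or $\varepsilon_{G.e}(\alpha)-\varepsilon_G(x)\ge0$ in the coincidence case. Carrying these constants through without slack is the crux.
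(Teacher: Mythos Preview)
Your argument is correct and takes a genuinely different route from the paper's. The paper establishes the conjecture by a \emph{per-vertex} decomposition: for each $u\in\widetilde V=V(G)\setminus\{x,y\}$ it shows that the quantity $\tfrac12\sum_v d_G(u,v)-\varepsilon_G(u)$ does not increase under contraction, and then proves a separate combined inequality for the pair $x,y$ versus $\alpha$. To make the per-vertex step go through it needs two auxiliary lemmas locating the eccentric vertex of a generic $u$ after contraction (their Lemmas~2.3 and~2.4). Your approach instead computes the \emph{global} drops $W(G)-W(G.e)$ and $\varepsilon(G)-\varepsilon(G.e)$, rewrites the Wiener drop as $1+\sum_{w\in\widetilde V}\max\{d_G(x,w),d_G(y,w)\}+R$, and then matches each $\max$ term against the eccentricity drop at the same $w$; the only shortfall is absorbed by the summands at eccentric vertices of $x$ and $y$. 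This avoids tracking eccentric vertices of arbitrary $u$ altogether and is arguably more transparent. What the paper's finer slicing buys is a slightly stronger intermediate statement (each $u\in\widetilde V$ individually satisfies the half-transmission-minus-eccentricity inequality), which your global comparison does not isolate.

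One minor point of presentation: your case split into ``generic $p\neq q$'', ``coincidence $p=q$'', and ``low eccentricity'' is sound but the phrasing is a bit loose. When $\varepsilon_G(x)=1$ and $\varepsilon_G(y)\ge 2$ you don't actually need a separate $p$ at all---keeping only the $q$-summand already gives the left side $\ge \varepsilon_G(y)$ and reduces to $\varepsilon_{G.e}(\alpha)\ge 1$. It would read more cleanly to state the three cases as: (i) $\varepsilon_G(y)=1$; (ii) $\varepsilon_G(y)\ge 2$ and one can choose eccentric witnesses $p,q\in\widetilde V$ with $p\neq q$; (iii) $\varepsilon_G(y)\ge 2$ and every such choice forces $p=q=r\in\widetilde V$. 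In (ii) the bound $\varepsilon_{G.e}(\alpha)\ge 1$ suffices, and in (iii) the identity $d_{G.e}(\alpha,r)=\min\{d_G(x,r),d_G(y,r)\}=\varepsilon_G(x)$ closes the gap exactly as you say.
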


An edge in a connected graph is a bridge if and only if removing it disconnects the graph. In \cite{Kc}, the authors proved the following theorem when the edge is a bridge as partial support for the Conjecture~\ref{Con1}. 

\begin{theorem}
If $e$ is a bridge of a graph $G$ with at least $3$ vertices, then $$W(G.e) - \varepsilon(G.e) \le W(G) - \varepsilon(G).$$
\end{theorem}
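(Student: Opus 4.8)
The plan is to exploit the structural simplicity that a bridge forces on distances. Write $e=xy$, and let $A$ (the component containing $x$) and $B$ (the component containing $y$) be the two components of $G-e$, with induced metrics $d_A,d_B$; set $a=|A|$, $b=|B|$, so $n=a+b\ge 3$. First I would record the three distance facts that do all the work: for $u\in A$ and $v\in B$ every shortest path uses the bridge, so $d_G(u,v)=d_A(u,x)+1+d_B(y,v)$; distances internal to $A$ or to $B$ are unchanged in $G$; and since the only $A$--$B$ link in $G.e$ is through $\alpha$, the subgraph $(A\setminus\{x\})\cup\{\alpha\}$ of $G.e$ is isometric to $A$ (with $\alpha\leftrightarrow x$), and likewise $(B\setminus\{y\})\cup\{\alpha\}$ is isometric to $B$. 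Introduce the distance sums $S_x=\sum_{u\in A}d_A(x,u)$, $S_y=\sum_{v\in B}d_B(y,v)$ and the one-sided eccentricities $D_x=\max_{u\in A}d_A(x,u)$, $D_y=\max_{v\in B}d_B(y,v)$.

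Next I would compute the Wiener difference exactly. Splitting the double sum into $A$-internal, $B$-internal, and cross pairs and using the bridge identity, the cross contribution in $G$ is $bS_x+aS_y+ab$ while in $G.e$ it is $(b-1)S_x+(a-1)S_y$ (plus the $\alpha$-pairs $S_x+S_y$); the internal sums differ only by the pairs involving $x$ or $y$. Collecting terms should give the clean identity
\[
W(G)-W(G.e)=S_x+S_y+ab .
\]
Then I would turn to eccentricity. Using the isometries, for $u\in A\setminus\{x\}$ one gets $\varepsilon_G(u)=\max\bigl(\varepsilon_A(u),\,d_A(u,x)+1+D_y\bigr)$ and $\varepsilon_{G.e}(u)=\max\bigl(\varepsilon_A(u),\,d_A(u,x)+D_y\bigr)$, so $0\le \varepsilon_G(u)-\varepsilon_{G.e}(u)\le 1$, and symmetrically for $v\in B\setminus\{y\}$. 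For the merged vertex, $\varepsilon_{G.e}(\alpha)=\max(D_x,D_y)$, $\varepsilon_G(x)=\max(D_x,1+D_y)$, $\varepsilon_G(y)=\max(1+D_x,D_y)$, whence
\[
\varepsilon_G(x)+\varepsilon_G(y)-\varepsilon_{G.e}(\alpha)\le \max(D_x,D_y)+2 .
\]

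Finally I would assemble everything. Summing the per-vertex bounds gives $\varepsilon(G)-\varepsilon(G.e)\le (a-1)+(b-1)+\max(D_x,D_y)+2=n+\max(D_x,D_y)$, so it suffices to prove $n+\max(D_x,D_y)\le S_x+S_y+ab$. The main obstacle is that these crude bounds are exactly one unit too weak — equality genuinely occurs (e.g. $P_3\to K_2$ gives $-1$ on both sides) — so the argument must recover that unit through sharper accounting. The key refinement is $S_x\ge D_x+(a-2)$ and $S_y\ge D_y+(b-2)$, obtained because among the $a-1$ positive terms of $S_x$ one equals $D_x$ and the rest are at least $1$. When $a,b\ge 2$ this reduces the required inequality to $ab+\min(D_x,D_y)\ge 4$, which holds since $ab\ge4$. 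The remaining leaf case (say $b=1$, forcing $a=n-1\ge2$, $D_y=S_y=0$) I would treat directly: here $\varepsilon_G(x)+\varepsilon_G(y)-\varepsilon_{G.e}(\alpha)=1+D_x$ and $\sum_{u\in A\setminus\{x\}}\bigl(\varepsilon_G(u)-\varepsilon_{G.e}(u)\bigr)$ counts only vertices whose farthest vertex in $A$ is $x$, so it is at most $a-1$; combined with $S_x\ge D_x+(a-2)$ this yields $\varepsilon(G)-\varepsilon(G.e)\le (a-1)+1+D_x\le S_x+a=W(G)-W(G.e)$, completing the proof and capturing the equality case.
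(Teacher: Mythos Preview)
Your argument is correct. The paper does not give a standalone proof of this bridge case; it quotes the result from \cite{Kc} and then establishes the full Conjecture~\ref{Con1} for an arbitrary edge via a vertex-by-vertex comparison (inequalities~\eqref{eqn:Conj0} and~\eqref{eqn:Conj1}), relying only on the one-step distance drops of Lemmas~\ref{Lem:D1}--\ref{Lem:D2} and the eccentric-vertex stability of Lemmas~\ref{Lem:E1}--\ref{Lem:E2}. Your route is genuinely different: you exploit the bridge structure to obtain the exact identity $W(G)-W(G.e)=S_x+S_y+ab$ together with explicit eccentricity formulas, and then close the gap with the sharp count $S_x\ge D_x+(a-2)$ plus a separate leaf analysis. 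What your approach buys is precise quantitative information (an exact Wiener drop and the equality case $P_3$), at the price of being tied to the bridge hypothesis; the paper's per-vertex bookkeeping is coarser but works uniformly for any edge. One cosmetic remark: in the leaf case the trivial bound $S_x\ge D_x$ already suffices to conclude $a+D_x\le S_x+a$, so invoking $S_x\ge D_x+(a-2)$ there is unnecessary, though harmless.
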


In this manuscript, we prove that the Conjecture~\ref{Con1} is true.

\section{Proof of Conjecture~\ref{Con1}}
Let $e$ be an edge in $G$ between the vertices $x$ and $y$ and the graph obtained from $G$ by contracting the edge $e$ is denoted by $G.e$. Let $\alpha$ be the vertex in $G.e$, which is formed by merging the vertices $x$ and $y$.

\begin{lem}\label{Lem:D1}
Let $u,v$ be two vertices in $G.e$ which is different from the vertex $\alpha$ then 
$$
d_{G.e} (u,v) = 
\begin{cases}
d_G(u,v) \textup{ or} \\
d_G(u,v) - 1.
\end{cases}
$$
\end{lem}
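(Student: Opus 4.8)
The plan is to prove this lemma by comparing shortest paths in $G$ and $G.e$ directly, using the fact that edge contraction can only decrease distances (it never increases them) and that the distance can drop by at most one.

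Let me think about what we need to show. We have two vertices $u, v$ in $G.e$, both different from $\alpha$. These correspond to the same two vertices in $G$ (since contracting $e$ only affects the merged vertex $\alpha$, all other vertices are unchanged). We want to show $d_{G.e}(u,v)$ equals either $d_G(u,v)$ or $d_G(u,v) - 1$.

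First, I would establish the upper bound $d_{G.e}(u,v) \le d_G(u,v)$. Take a shortest $uv$-path $P$ in $G$. I claim this path survives as a walk in $G.e$ of length at most $d_G(u,v)$. If $P$ does not use the edge $e$, then every edge of $P$ is still present in $G.e$ (contraction only removes $e$ itself and identifies $x,y$), so $P$ is a $uv$-path in $G.e$ of the same length, giving $d_{G.e}(u,v) \le d_G(u,v)$. If $P$ does use $e$, then $P$ passes through both $x$ and $y$ consecutively; in $G.e$ these become the single vertex $\alpha$, so contracting $e$ effectively deletes one step from the path, yielding a $uv$-walk in $G.e$ of length $d_G(u,v) - 1$, and hence $d_{G.e}(u,v) \le d_G(u,v) - 1 \le d_G(u,v)$.

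Second, I would establish the lower bound $d_{G.e}(u,v) \ge d_G(u,v) - 1$. Take a shortest $uv$-path $Q$ in $G.e$, of length $d_{G.e}(u,v)$. I want to lift $Q$ back to a walk in $G$ of length at most $d_{G.e}(u,v) + 1$. If $Q$ does not pass through $\alpha$, then every edge of $Q$ corresponds to an edge of $G$ and $Q$ is already a $uv$-path in $G$, so $d_G(u,v) \le d_{G.e}(u,v)$. If $Q$ passes through $\alpha$, then at $\alpha$ the path enters from some neighbour $a$ and leaves to some neighbour $b$, where in $G$ the edges $a\alpha$ and $\alpha b$ came from edges incident to $x$ or $y$. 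In the worst case $a$ is adjacent to $x$ (say) and $b$ is adjacent to $y$, so to traverse the same connection in $G$ we replace the single vertex $\alpha$ by the two vertices $x, y$ joined by the edge $e$, inserting exactly one extra edge. This produces a $uv$-walk in $G$ of length $d_{G.e}(u,v) + 1$, giving $d_G(u,v) \le d_{G.e}(u,v) + 1$, i.e. $d_{G.e}(u,v) \ge d_G(u,v) - 1$. Combining the two bounds gives $d_G(u,v) - 1 \le d_{G.e}(u,v) \le d_G(u,v)$, so $d_{G.e}(u,v)$ takes one of the two stated values.

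The main obstacle, and the point requiring the most care, is the lower-bound direction: justifying that when the $G.e$-shortest path uses $\alpha$, lifting it to $G$ costs at most one extra edge. One must argue cleanly that $\alpha$ appears at most once on a shortest path (otherwise it would contain a cycle and not be shortest), so only a single insertion of $e$ is ever needed, and that the neighbours of $\alpha$ in $G.e$ are exactly $N_G(x) \cup N_G(y)$ as noted in the setup, so the entering and leaving edges genuinely correspond to edges incident to $x(\text{or } y)$ in $G$. Handling the subcase where both the predecessor and successor of $\alpha$ attach to the same vertex ($x$ or $y$) is even easier, as then no extra edge is needed at all, and the generic subcase where they attach to different endpoints is the one that forces the $-1$.
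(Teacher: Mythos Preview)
Your proof is correct and follows the same core idea as the paper's: analyze shortest $uv$-paths according to whether they use the contracted edge $e$. However, your argument is actually more complete. The paper only maps geodesics from $G$ to $G.e$, which establishes the upper bound $d_{G.e}(u,v)\le d_G(u,v)$ (or $\le d_G(u,v)-1$ when some geodesic uses $e$), and then asserts equality in each case without justifying why $d_{G.e}(u,v)$ cannot be strictly smaller. You explicitly supply the missing lower bound $d_{G.e}(u,v)\ge d_G(u,v)-1$ by lifting a $G.e$-geodesic back to $G$ and inserting the edge $e$ at $\alpha$ if necessary, together with the observation that $\alpha$ occurs at most once on a shortest path so at most one extra edge is ever inserted; this is precisely the detail the paper's proof leaves implicit.
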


\begin{proof}
Let $P$ be a $uv$-path in $G$ then, we have two possibilities; either $P$ uses the edge $e$ or it does not. Suppose all $uv$-paths of length $d_G(u,v)$ does not use the edge $e$ then, the paths remain preserved in $G.e$ and we have $d_{G.e} (u,v) = d_G(u,v)$. If there exists a $uv$-path in $G$ of length $d_G(u,v)$ that uses the edge $e$ then, the length of the $uv$-path in $G.e$ is decreased by $1$ and we have $d_{G.e} (u,v) = d_G(u,v) - 1$.
\end{proof}

\begin{lem}\label{Lem:D2}
Let $u \neq \alpha$ be a vertex in $G.e$ then
\begin{equation*}
d_{G.e} (u,\alpha) = 
\begin{cases}
d_G(u,x) \textup{ or}\\
d_G(u,x) - 1.
\end{cases}
\end{equation*}
\end{lem}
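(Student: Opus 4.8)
The plan is to reduce the distance to $\alpha$ to a distance to the \emph{closer} of the two merged vertices $x$ and $y$, and then exploit that $x$ and $y$ are adjacent in $G$. Concretely, I would first prove the identity
$$d_{G.e}(u,\alpha) = \min\{d_G(u,x),\, d_G(u,y)\},$$
and afterwards observe that, because $e=xy$ is an edge of $G$, the triangle inequality forces $|d_G(u,x) - d_G(u,y)| \le 1$; hence the minimum on the right-hand side equals either $d_G(u,x)$ or $d_G(u,x)-1$, which is exactly the claim.

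For the identity I would argue by two inequalities. For the upper bound, assume without loss of generality that $d_G(u,x) \le d_G(u,y)$ and take a shortest $ux$-path $P$ in $G$. Such a $P$ cannot pass through $y$ (otherwise $d_G(u,y) < d_G(u,x)$), so contracting $e$ sends $P$ to a $u\alpha$-walk in $G.e$ of the same length, giving $d_{G.e}(u,\alpha) \le d_G(u,x) = \min\{d_G(u,x),\, d_G(u,y)\}$. For the lower bound, I would take a shortest $u\alpha$-path $Q$ in $G.e$; being shortest, it meets $\alpha$ only at its final vertex, and its penultimate vertex $w$ lies in $N_{G.e}(\alpha) = N_G(x) \cup N_G(y)$. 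Deleting the last vertex leaves a $uw$-path in $G.e$ avoiding $\alpha$, whose edges are all present in $G$; appending the edge from $w$ to whichever of $x,y$ it neighbours produces a $ux$- or $uy$-path in $G$ of length $d_{G.e}(u,\alpha)$. This yields $\min\{d_G(u,x),\, d_G(u,y)\} \le d_{G.e}(u,\alpha)$, and the two bounds together give the identity.

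The step I expect to require the most care is the lower bound's lifting argument: I must be sure that the portion of the shortest $u\alpha$-path before $\alpha$ genuinely corresponds to a path in $G$ on the original vertex set, i.e.\ that it uses no edge existing only because of the contraction. This is guaranteed because those edges lie among vertices different from $\alpha$ and therefore coincide with edges of $G$. Once the identity $d_{G.e}(u,\alpha) = \min\{d_G(u,x),\, d_G(u,y)\}$ is secured, the remainder is the short triangle-inequality observation, and the stated two-case conclusion follows immediately. This mirrors the structure of Lemma~\ref{Lem:D1}, the essential new ingredient being that $\alpha$ absorbs both endpoints of $e$.
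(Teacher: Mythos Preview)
Your proof is correct. Both you and the paper lift a shortest $u\alpha$-path in $G.e$ back to $G$ by examining its penultimate vertex $w \in N_G(x) \cup N_G(y)$, so the core mechanism is the same. The organization, however, differs: the paper argues directly by a case split on whether some shortest $u\alpha$-path has its penultimate vertex adjacent to $x$ in $G$, concluding $d_G(u,x)=d_{G.e}(u,\alpha)$ or $d_G(u,x)=d_{G.e}(u,\alpha)+1$ in the two cases; you instead first establish the clean identity $d_{G.e}(u,\alpha)=\min\{d_G(u,x),d_G(u,y)\}$ via matching upper and lower bounds, and then finish with the triangle inequality $|d_G(u,x)-d_G(u,y)|\le 1$. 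Your route has the advantage of making both inequalities explicit (the paper's Case~2 tacitly uses $d_{G.e}(u,\alpha)\le d_G(u,x)$ without spelling out the contraction-of-a-shortest-path argument), and the intermediate $\min$ formula is a reusable statement in its own right; the paper's version is marginally shorter by collapsing the two steps into one case analysis.
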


\begin{proof}
Let $P$ be a path of minimal length in $G.e$ between $u$ and $\alpha$. Let $u_n$ be the vertex on the path $P$ which is adjacent to $\alpha$. Then in $G$, the vertex $u_n$ is either adjacent to $x$ or $y$ or both. Next, we consider the following cases to complete the proof.

\underline{\textbf{Case 1:}} Suppose for all minimal paths of length $d_{G.e} (u,\alpha)$, the vertex $u_n$ adjacent to $\alpha$ is not adjacent to $x$ in $G$. Then $u_n \sim y$ and the path $P_1 = u \sim \cdots \sim u_n \sim y \sim x$ from $u$ to $x$ is a path of minimal length in $G$ and hence we have $d_G(u,x) = d_{G.e} (u,\alpha) +1$.

\underline{\textbf{Case 2:}} Suppose there exists a minimal path of length $d_{G.e} (u,\alpha)$ such that the vertex $u_n$ is adjacent to $\alpha$ in $G.e$ and is adjacent to $x$ in $G$. Then it follows that $d_G(u,x) = d_{G.e} (u,\alpha)$.
\end{proof}

\begin{lem}\label{Lem:E1}
Let $x ($or $y)$ be the eccentric vertex(or vertices) of $u$ in $G$. Then, $\alpha$ is the eccentric vertex of $u$ in $G.e$.
\end{lem}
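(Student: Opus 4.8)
The plan is to prove that $\alpha$ attains the eccentricity of $u$ in $G.e$ by establishing a lower bound on $d_{G.e}(u,\alpha)$ and a matching upper bound on $d_{G.e}(u,w)$ for every other vertex $w$, and then observing that the maximum of $d_{G.e}(u,\cdot)$ is forced to occur at $\alpha$. The reading of the hypothesis that makes this work is that the eccentric vertices of $u$ in $G$ are \emph{exactly} $x$ and/or $y$; in particular $u \in V(G.e)\setminus\{\alpha\}$, and every vertex $w \in V(G)\setminus\{x,y\}$ satisfies the strict inequality $d_G(u,w) \le \varepsilon_G(u)-1$, since such a $w$ is not an eccentric vertex of $u$.

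For the lower bound, I would first reduce to a convenient case: since the merge producing $\alpha$ treats $x$ and $y$ symmetrically, I may assume without loss of generality that $x$ is an eccentric vertex of $u$ in $G$ (if only $y$ is, simply interchange the labels of $x$ and $y$). Hence $d_G(u,x) = \varepsilon_G(u)$. Applying Lemma~\ref{Lem:D2} to the pair $(u,\alpha)$ gives $d_{G.e}(u,\alpha) \in \{\, d_G(u,x),\, d_G(u,x)-1 \,\}$, so in either case $d_{G.e}(u,\alpha) \ge d_G(u,x)-1 = \varepsilon_G(u)-1$.

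For the upper bound, take any $w \in V(G.e)$ with $w \neq \alpha$, which is the same as taking $w \in V(G)\setminus\{x,y\}$. Lemma~\ref{Lem:D1} yields $d_{G.e}(u,w) \le d_G(u,w)$, and because $w$ is not an eccentric vertex of $u$ we have $d_G(u,w) \le \varepsilon_G(u)-1$. Combining these with the lower bound gives $d_{G.e}(u,w) \le \varepsilon_G(u)-1 \le d_{G.e}(u,\alpha)$ for every $w \neq \alpha$. Since $V(G.e)$ consists of $\alpha$ together with these vertices $w$, the quantity $\varepsilon_{G.e}(u) = \max_{z \in V(G.e)} d_{G.e}(u,z)$ is attained at $z = \alpha$, that is $\varepsilon_{G.e}(u) = d_{G.e}(u,\alpha)$, which is exactly the assertion that $\alpha$ is an eccentric vertex of $u$ in $G.e$.

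The main obstacle, and the only genuinely delicate point, is the correct use of the hypothesis. The statement is true precisely because all eccentric vertices of $u$ lie in $\{x,y\}$: were some third vertex $w$ also eccentric with a shortest $u$--$w$ path avoiding the edge $e$, then Lemma~\ref{Lem:D1} would give $d_{G.e}(u,w) = \varepsilon_G(u)$, which could strictly exceed $d_{G.e}(u,\alpha) = \varepsilon_G(u)-1$ and make $\alpha$ fail to be eccentric. Thus the substantive step is to extract from ``$x$ (or $y$) is the eccentric vertex (or vertices) of $u$'' the strict drop $d_G(u,w) \le \varepsilon_G(u)-1$ for every remaining $w$; once this is secured, the two distance lemmas close the argument immediately with no further computation.
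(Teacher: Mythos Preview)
Your proof is correct and uses essentially the same ingredients as the paper's: the strict inequality $d_G(u,w)\le \varepsilon_G(u)-1$ for $w\notin\{x,y\}$, together with Lemmas~\ref{Lem:D1} and~\ref{Lem:D2}. The only cosmetic difference is that the paper packages the argument as a proof by contradiction (assuming some $w\neq\alpha$ is eccentric in $G.e$ and deriving $d_{G.e}(u,\alpha)\le d_G(u,x)-2$), whereas you chain the same inequalities directly.
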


\begin{proof}
Suppose on the contrary we assume that $\alpha$ is not the eccentric vertex of $u$ and $w \neq \alpha$ is an eccentric vertex of $u$ in $G.e$. Then we have
\begin{equation}\label{eqn:E1_e1}
d_{G.e} (u,\alpha) \le d_{G.e} (u,w) - 1 \text{ and }
\end{equation}
\begin{equation}\label{eqn:E1_e2}
d_G(u,w) \le d_G(u,x) - 1.
\end{equation} Thus, combining Eqns.~\eqref{eqn:E1_e1} and \eqref{eqn:E1_e2} we have $$d_{G.e} (u,\alpha) +1 \le d_{G.e} (u,w) \le d_G(u,w) \le d_G(u,x) - 1,$$ which implies that $d_{G.e} (u,\alpha) \le d_G(u,x) - 2$, but this is a contradiction by Lemma~\ref{Lem:D2} and the result follows.
\end{proof}

\begin{lem}\label{Lem:E2}
If there is an eccentric vertex of $u$ in $G$ other than $x$ and $y$ then there exist an eccentric vertex of $u$ that is common in both $G$ and $G.e$.
\end{lem}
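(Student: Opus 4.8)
The plan is to track the eccentricity of $u$ under contraction and to produce an explicit common eccentric vertex. Throughout, $u \ne x,y$ (otherwise $u$ would not be a vertex of $G.e$), and I write $w$ for the given eccentric vertex of $u$ in $G$ with $w \ne x,y$, so that $d_G(u,w) = \varepsilon_G(u)$ and $w$ survives as a vertex of $G.e$. First I would record that contraction can only shrink eccentricity: by Lemma~\ref{Lem:D1} every vertex $z \ne \alpha$ satisfies $d_{G.e}(u,z) \le d_G(u,z) \le \varepsilon_G(u)$, and by Lemma~\ref{Lem:D2} we have $d_{G.e}(u,\alpha) \le d_G(u,x) \le \varepsilon_G(u)$; taking the maximum over all vertices of $G.e$ yields $\varepsilon_{G.e}(u) \le \varepsilon_G(u)$. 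Since $w$ is a vertex of $G.e$, we also always have $d_{G.e}(u,w) \le \varepsilon_{G.e}(u)$.

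The argument then splits on whether $w$ retains its distance. If some minimal $uw$-path in $G$ avoids $e$, then by the first case of Lemma~\ref{Lem:D1} we get $d_{G.e}(u,w) = d_G(u,w) = \varepsilon_G(u)$; combined with $\varepsilon_{G.e}(u) \le \varepsilon_G(u) = d_{G.e}(u,w) \le \varepsilon_{G.e}(u)$, this forces $d_{G.e}(u,w) = \varepsilon_{G.e}(u)$, so $w$ is eccentric for $u$ in both graphs and we are done. Hence the substantive case is when every minimal $uw$-path uses $e$; then Lemma~\ref{Lem:D1} gives $d_{G.e}(u,w) = \varepsilon_G(u) - 1$, and by the previous paragraph $\varepsilon_{G.e}(u) \in \{\varepsilon_G(u) - 1,\ \varepsilon_G(u)\}$.

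If $\varepsilon_{G.e}(u) = \varepsilon_G(u) - 1$, then $d_{G.e}(u,w) = \varepsilon_{G.e}(u)$ and once more $w$ is the common eccentric vertex. The remaining case $\varepsilon_{G.e}(u) = \varepsilon_G(u)$, with $w$ having lost a unit of distance, is the one I expect to be hardest. Here I would choose a vertex $z^\ast$ of $G.e$ with $d_{G.e}(u,z^\ast) = \varepsilon_G(u)$. If $z^\ast \ne \alpha$, then $d_G(u,z^\ast) \ge d_{G.e}(u,z^\ast) = \varepsilon_G(u)$ forces $d_G(u,z^\ast) = \varepsilon_G(u)$, so $z^\ast$ is eccentric for $u$ in both graphs and serves as the common vertex.

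The crux is therefore to rule out that $\alpha$ is the \emph{only} farthest vertex from $u$ in $G.e$. The key is a short distance computation: if $d_{G.e}(u,\alpha) = \varepsilon_G(u)$, then applying Lemma~\ref{Lem:D2} to both $x$ and $y$ (whose roles in forming $\alpha$ are symmetric) forces $d_G(u,x) = d_G(u,y) = \varepsilon_G(u)$. But then any $uw$-path traversing $e$ has length at least $\min\{d_G(u,x), d_G(u,y)\} + 1 = \varepsilon_G(u) + 1 > \varepsilon_G(u) = d_G(u,w)$, so no minimal $uw$-path can use $e$, contradicting the standing assumption of this case. Hence a farthest vertex $z^\ast \ne \alpha$ must exist, which completes the proof. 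This final elimination is the only genuine obstacle; everything else is bookkeeping with Lemmas~\ref{Lem:D1} and \ref{Lem:D2}.
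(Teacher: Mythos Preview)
Your overall strategy is sound and is actually cleaner than the paper's (which splits instead on whether $x$ or $y$ is also eccentric and runs two separate contradiction arguments), but the dichotomy you set up is not the one Lemma~\ref{Lem:D1} supports. The first alternative of Lemma~\ref{Lem:D1} gives $d_{G.e}(u,w)=d_G(u,w)$ only when \emph{no} minimal $uw$-path uses $e$; your Case~A assumes merely that \emph{some} minimal $uw$-path avoids $e$, and from this you cannot conclude $d_{G.e}(u,w)=\varepsilon_G(u)$. Concretely, take $V(G)=\{u,x,y,w,a,b,c\}$ with edges $u\!\sim\! x$, $x\!\sim\! y$ (the edge $e$), $y\!\sim\! w$, $u\!\sim\! a$, $a\!\sim\! b$, $b\!\sim\! w$, $b\!\sim\! c$. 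Then $\varepsilon_G(u)=3$ with eccentric vertices $w$ and $c$; the minimal $uw$-paths are $u\,x\,y\,w$ (using $e$) and $u\,a\,b\,w$ (avoiding $e$), so you are in your Case~A. But $d_{G.e}(u,w)=2$ while $d_{G.e}(u,c)=3$, so $w$ is \emph{not} eccentric in $G.e$ and your Case~A conclusion fails for this choice of $w$.

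The repair is painless: swap the roles of the two cases. Take Case~A to be ``no minimal $uw$-path uses $e$'' (so Lemma~\ref{Lem:D1} really gives $d_{G.e}(u,w)=\varepsilon_G(u)$ and $w$ works), and Case~B to be ``some minimal $uw$-path uses $e$'' (so $d_{G.e}(u,w)=\varepsilon_G(u)-1$). Your Case~B analysis then goes through verbatim, since your final contradiction---that no minimal $uw$-path can use $e$ once $d_G(u,x)=d_G(u,y)=\varepsilon_G(u)$---already contradicts the weaker hypothesis ``some minimal path uses $e$''. With this correction the argument is complete and, compared with the paper's proof, avoids the separate treatment of whether $x$ or $y$ is itself eccentric.
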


\begin{proof}
We prove the lemma by considering the following two cases:

\underline{\textbf{Case 1:}} Let $w_1,w_2,\cdots,w_k$ be the eccentric vertices of $u$ in $G$, such that none of the $w_i$'s are equal to $x$ or $y$. Suppose on the contrary, we assume that $w \neq w_i$ for $1 \le i \le k$ is an eccentric vertex of $u$ in $G.e$. Then, the following holds
\begin{equation}\label{eqn:E2_e1}
d_{G.e} (u,w_i) \le d_{G.e} (u,w) - 1 \text{ for } 1 \le i \le k  \text{ and }
\end{equation}
\begin{equation}\label{eqn:E2_e2}
d_G(u,w) \le d_G(u,w_i) - 1 \text{ for } 1 \le i \le k .
\end{equation} Combining Eqns.~\eqref{eqn:E2_e1} and \eqref{eqn:E2_e2} we have $$d_{G.e} (u,w_i) +1 \le d_{G.e} (u,w) \le d_G(u,w) \le d_G(u,w_i) - 1,$$ which implies that $d_{G.e} (u,w_i) \le d_G(u,w_i) - 2$, but this is a contradiction by Lemma~\ref{Lem:D1} and hence $w = w_i$ for some $1 \le i \le k$.

\underline{\textbf{Case 2:}}  Let $w_1,w_2,\cdots,w_k$ be the eccentric vertices of $u$ in $G$ other than $x$ and $y$. Observe that for $1 \le i \le k$, none of the minimal $uw_i$-paths in $G$ use the edge $e$. If a $uw_i$-path uses the edge $e$ then we have either $u \sim \cdots \sim x \sim y \sim \cdots \sim w_i$ or $u \sim \cdots \sim y \sim x \sim \cdots \sim w_i$ but in any of the cases $d_G(u,w_i) > d_G(u,x)$, which is a contradiction. Since the $uw_i$-path of minimal length does not use the edge $e$ in $G$ the same is preserved in $G.e$, \textit{i.e.} $d_G(u,w_i) = d_{G.e}(u,w_i)$. If there are no other eccentric vertices $w$ other than $\alpha$ in $G.e$ then, $d_{G.e} (u,w) < d_{G.e} (u,\alpha)$ for all $w \in V(G.e)\setminus \{\alpha\}$. But all the $uw$-paths of minimal length in $G.e$ are preserved in $G$ which implies that $x$ or $y$ are the only eccentric vertices of $u$ in $G$, which is a contradiction. Now suppose $w \neq w_i$ for $1 \le i \le k$, be an eccentric vertex of $u$ in $G.e$ then by similar arguments as in the previous case we arrive at a contradiction and hence the result follows.

\end{proof}

Now we are ready to prove the Conjecture~\ref{Con1}.

\begin{proof}[Proof of Conjecture~\ref{Con1}]
Let $G$ be a connected graph on $n$ vertices. If $n = 3$ then, $G$ is either the complete graph $K_3$ or the path graph $P_3$ of length $2$. In either of the cases the resulting graph after contraction of an edge  will lead to a single edge \textit{i.e.} $K_2$. It is easy to see that $W(K_2) - \varepsilon(K_2) \le W(G) - \varepsilon(G)$. Thus, the result is true when $n = 3$. Now we consider the case when $n \ge 4$. The difference between the Wiener index and eccentricity of $G$ and $G.e$ can be expressed as
\begin{align*}
\begin{split}
W(G) - \varepsilon(G) &= \sum_{u \in V(G)} \left(\frac{1}{2} \sum_{v \in V(G)} d_G(u,v) - \max_{v \in V(G)} d_G(u,v)  \right).\\
W(G.e) - \varepsilon(G.e) &= \sum_{u \in V(G.e)} \left(\frac{1}{2} \sum_{v \in V(G.e)} d_{G.e}(u,v) - \max_{v \in V(G.e)} d_{G.e}(u,v)  \right).
\end{split}
\end{align*}
Let $\widetilde{V}$ denote the set of vertices that are common in both $G$ and $G.e$, \textit{i.e.} $ V(G) = \widetilde{V} \cup \{x,y\}$ and $V(G.e) = \widetilde{V} \cup \{\alpha\}$. We complete the prove by showing that for all $u \in \widetilde{V}$
\begin{equation}\label{eqn:Conj0}
\frac{1}{2} \sum_{v \in V(G)} d_G(u,v) - \max_{v \in V(G)} d_G(u,v) \ge \frac{1}{2} \sum_{v \in V(G.e)} d_{G.e}(u,v) - \max_{v \in V(G.e)} d_{G.e}(u,v) 
\end{equation}  and 
\begin{equation}\label{eqn:Conj1}
\begin{split}
\frac{1}{2} \sum_{v \in V(G)} d_G(x,v) - \max_{v \in V(G)} d_G(x,v) + \frac{1}{2} \sum_{v \in V(G)} d_G(y,v) - \max_{v \in V(G)} d_G(y,v) \\
\ge \frac{1}{2} \sum_{v \in V(G.e)} d_{G.e}(\alpha,v) - \max_{v \in V(G.e)} d_{G.e}(\alpha,v).
\end{split}
\end{equation}

To prove Eqns.	~\eqref{eqn:Conj0} and \eqref{eqn:Conj1} we consider the following cases.

\underline{\textbf{Case 1:}} Let $u \in \widetilde{V}$.

\underline{\textbf{Subcase 1.1:}} Let $x ( \text{or } y)$ be the eccentric vertex (or vertices) of $u$ in $G$. Thus, by Lemma~\ref{Lem:E1} $\alpha$ is the eccentric vertex of $u$ in $G.e$. To prove Eqn.~\eqref{eqn:Conj0} it is enough to show that
\begin{equation*}
\frac{1}{2} \sum_{v \in V(G)} d_G(u,v) - \max_{v \in V(G)} d_G(u,v) - \frac{1}{2} \sum_{v \in V(G.e)} d_{G.e}(u,v) + \max_{v \in V(G.e)} d_{G.e}(u,v) \ge 0.
\end{equation*} Simplifying the left side of the inequality we have,

\begin{align*}
&\frac{1}{2} \sum_{v \in V(G)} d_G(u,v) - \max_{v \in V(G)} d_G(u,v) - \frac{1}{2} \sum_{v \in V(G.e)} d_{G.e}(u,v) + \max_{v \in V(G.e)} d_{G.e}(u,v)\\
&= \frac{1}{2} \sum_{v \in \widetilde{V}} d_G(u,v) +\frac{1}{2} d_G(u,x) + \frac{1}{2} d_G(u,y) - \max_{v \in V(G)} d_G(u,v) \\
& \quad \quad - \frac{1}{2} \sum_{v \in \widetilde{V}} d_{G.e}(u,v) - \frac{1}{2} d_{G.e}(u,\alpha) + \max_{v \in V(G.e)} d_{G.e}(u,v)\\
&= \frac{1}{2} \sum_{v \in \widetilde{V}} d_G(u,v) -\frac{1}{2} d_G(u,x) + \frac{1}{2} d_G(u,y) - \frac{1}{2} \sum_{v \in \widetilde{V}} d_{G.e}(u,v) + \frac{1}{2} d_{G.e}(u,\alpha) \\
&= \frac{1}{2} \left( \sum_{v \in \widetilde{V}} d_G(u,v) -\sum_{v \in \widetilde{V}} d_{G.e}(u,v)  \right) + \frac{1}{2} \left( d_G(u,y) +  d_{G.e}(u,\alpha) -d_G(u,x) \right).
\end{align*} Finally, using Lemmas~\ref{Lem:D1} and~\ref{Lem:D2} and using the fact that $u \neq y$ the result follows. Note that we have used the fact that $x$ is an eccentric vertex of $u$. Similar calculations will follow if $y$ is an eccentric vertex of $u$.

\underline{\textbf{Subcase 1.2:}} Let $w$ be an eccentric vertex of $u$ in $G$ other than $x$ and $y$. Without loss of generality using Lemma~\ref{Lem:E2}, we can choose $w \in \widetilde{V}$ such that $w$ is an eccentric vertex of $u$ in both $G$ and $G.e$. To prove Eqn.~\eqref{eqn:Conj0} it is enough to show that
\begin{equation*}
\frac{1}{2} \sum_{v \in V(G)} d_G(u,v) - \max_{v \in V(G)} d_G(u,v) - \frac{1}{2} \sum_{v \in V(G.e)} d_{G.e}(u,v) + \max_{v \in V(G.e)} d_{G.e}(u,v) \ge 0.
\end{equation*} Simplifying the left side of the inequality we have,
\begin{align*}
&\frac{1}{2} \sum_{v \in V(G)} d_G(u,v) - \max_{v \in V(G)} d_G(u,v) - \frac{1}{2} \sum_{v \in V(G.e)} d_{G.e}(u,v) + \max_{v \in V(G.e)} d_{G.e}(u,v)\\
&= \frac{1}{2} \sum_{v \in \widetilde{V}} d_G(u,v) +\frac{1}{2} d_G(u,x) + \frac{1}{2} d_G(u,y) - \max_{v \in V(G)} d_G(u,v) \\
& \qquad \qquad - \frac{1}{2} \sum_{v \in \widetilde{V}} d_{G.e}(u,v) - \frac{1}{2} d_{G.e}(u,\alpha) + \max_{v \in V(G.e)} d_{G.e}(u,v)\\
&= \frac{1}{2} \sum_{v \in \widetilde{V}\setminus\{w\}} d_G(u,v) +\frac{1}{2} d_G(u,x) + \frac{1}{2} d_G(u,y) -  \frac{1}{2} d_G(u,w)\\
& \qquad \qquad - \frac{1}{2} \sum_{v \in \widetilde{V}\setminus\{w\}} d_{G.e}(u,v) - \frac{1}{2} d_{G.e}(u,\alpha) + \frac{1}{2} d_{G.e}(u,w)\\
&= \frac{1}{2} \left( \sum_{v \in \widetilde{V}\setminus\{w\}} d_G(u,v) -  \sum_{v \in \widetilde{V}\setminus\{w\}} d_{G.e}(u,v)\right)\\
& \qquad \qquad +\frac{1}{2} \left( d_G(u,x) +  d_G(u,y) -   d_G(u,w) -  d_{G.e}(u,\alpha) +  d_{G.e}(u,w) \right).
\end{align*}
Since $\ds \sum_{v \in \widetilde{V}\setminus\{w\}} d_G(u,v) -  \sum_{v \in \widetilde{V}\setminus\{w\}} d_{G.e}(u,v) \ge 0$ follows from Lemma~\ref{Lem:D1}, it only remains to show that $d_G(u,x) +  d_G(u,y) -  d_{G.e}(u,\alpha) \ge  d_G(u,w) - d_{G.e}(u,w).$ Note that, using Lemma~\ref{Lem:D1} we have $d_G(u,w) - d_{G.e}(u,w)$ is either $0$ or $1$. Similarly, by Lemma~\ref{Lem:D2} $d_G(u,x) - d_{G.e}(u,\alpha)$ is either $0$ or $1$. Combining, we have  $d_G(u,x) +  d_G(u,y) -  d_{G.e}(u,\alpha) \ge 1$ since $u \neq y$ and hence the result follows.

\underline{\textbf{Case 2:}} In this case we prove the inequality~\eqref{eqn:Conj1}. Let $w_1$ and $w_2$ be eccentric vertices of $x$ and $y$, respectively. Without loss of generality, we assume that $w_1$ is an eccentric vertex of $\alpha$. Then we have the following
\begin{align*}
&\frac{1}{2} \sum_{v \in V(G)} d_G(x,v) - \max_{v \in V(G)} d_G(x,v) + \frac{1}{2} \sum_{v \in V(G)} d_G(y,v) - \max_{v \in V(G)} d_G(y,v) \\
& \quad \quad - \frac{1}{2} \sum_{v \in V(G.e)} d_{G.e}(\alpha,v) + \max_{v \in V(G.e)} d_{G.e}(\alpha,v)\\
&= \frac{1}{2} \sum_{v \in V(G)\setminus\{w_1\}} d_G(x,v) - \frac{1}{2} d_G(x,w_1) + \frac{1}{2} \sum_{v \in V(G)\setminus\{w_2\}} d_G(y,v) - \frac{1}{2} d_G(y,w_2) \\
& \quad \quad - \frac{1}{2} \sum_{v \in V(G.e)\setminus\{w_1\}} d_{G.e}(\alpha,v) + \frac{1}{2} d_{G.e}(\alpha,w_1)\\
&= \frac{1}{2} \left(\sum_{v \in V(G)\setminus\{w_1,y\}} d_G(x,v) - \sum_{v \in V(G.e)\setminus\{w_1\}} d_{G.e}(\alpha,v) \right)\\
&\quad \quad +\frac{1}{2} \left(  \sum_{v \in V(G)\setminus\{w_2\}} d_G(y,v)+d_G(x,y)  - d_G(x,w_1) - d_G(y,w_2) + d_{G.e}(\alpha,w_1) \right).
\end{align*}


Since the graph $G$ is connected there exists a vertex $w_3$ on the $yw_2$-path of minimal length such that $d_G(y,w_2) = d_G(y,w_3) + 1$. From Lemma~\ref{Lem:D2} we have $d_G(x,w_1) - d_{G.e}(\alpha,w_1)$ is at most $1$. Thus, to show the fact that
\begin{equation}\label{eqn:claim2}
\sum_{v \in V(G)\setminus\{w_2\}} d_G(y,v)+d_G(x,y) - d_G(y,w_2) \ge d_G(x,w_1) - d_{G.e}(\alpha,w_1)
\end{equation} it is enough to prove that $$\sum_{v \in V(G)\setminus\{w_2,w_3\}} d_G(y,v) +d_G(x,y) \ge 2.$$ But this is always true since $G$ has at least four vertices. Finally, using Lemma~\ref{Lem:D2} and Eqn.~\ref{eqn:claim2} we have,
\begin{align*}\label{eqn:Conj3}
\begin{split}
\frac{1}{2} \sum_{v \in V(G)} d_G(x,v) - \max_{v \in V(G)} d_G(x,v) + \frac{1}{2} \sum_{v \in V(G)} d_G(y,v) - \max_{v \in V(G)} d_G(y,v)\\
\ge \frac{1}{2} \sum_{v \in V(G.e)} d_{G.e}(\alpha,v) - \max_{v \in V(G.e)} d_{G.e}(\alpha,v).
\end{split}
\end{align*}
Thus, combining all the above cases and using the inequalities~\eqref{eqn:Conj0} and \eqref{eqn:Conj1} we have $$W(G.e) - \varepsilon(G.e) \le W(G) - \varepsilon(G).$$
\end{proof}

\begin{rem}
In \cite{Kc}, the authors posed a second conjecture stating that the difference between the Wiener index of a
graph and its eccentricity is largest possible on paths. If $G$ be a graph of order $n$ with $\textup{rad}(G) \ge 4$, then
$$W(G) - \varepsilon(G) \le \floor*{\frac{1}{6}n^3 - \frac{3}{4}n^2 + \frac{1}{3}n + \frac{1}{4}}$$ with equality holding if and only if $G$ is a path. The Conjecture is still open.
\end{rem}

\noindent{ \textbf{\Large Acknowledgements}}\\
We would like to thank Dr. Sumit Mohanty for his comments, suggestions and help in improving the presentation of the manuscript.

\end{document}